\numberwithin{equation}{section}
\theoremstyle{plain}
\newtheorem{thm}{Theorem}[section]
\newtheorem*{thm*}{Theorem}
\newtheorem{prop}[thm]{Proposition}
\newtheorem{cor}[thm]{Corollary}
\newtheorem{defn}[thm]{Definition}
\newtheorem{lemma}[thm]{Lemma}
\newcommand{\diff}{\mathrm{d}}
\newcommand{\eps}{\epsilon}
\providecommand{\keywords}[1]
{
  \small	
  \textbf{\textit{Keywords--}} #1
}
\providecommand{\msc}[1]
{
  \small	
  \textbf{\textit{MSC classes--}} #1
}
\title{Existence-Uniqueness Theory and Small-Data Decay for a Reaction-Diffusion Model of Wildfire Spread}
\author{A. George Morgan\footnote{Department of Mathematics,
University of Toronto,
40 St. George St., Room 6290,
Toronto, Ontario, CA,
M5S 2E4. Institutional email: 
adam.morgan@mail.utoronto.ca}
}
\date{}
\begin{document}

\maketitle

 \begin{abstract}


 \noindent 
I examine some analytical properties of a nonlinear reaction-diffusion system that has been used to model the propagation of a wildfire. I establish global-in-time existence and uniqueness of bounded mild solutions to the Cauchy problem for this system given bounded initial data. In particular, this shows that the model does not allow for thermal blow-up. If the initial temperature and fuel density also satisfy certain integrability conditions, the $L^2$-norms of these global solutions are uniformly bounded in time. Additionally, I use a bootstrap argument to show that small initial temperatures give rise to solutions that decay to zero as time goes to infinity, proving the existence of initial states that do not develop into travelling combustion waves. 
  \end{abstract}

\noindent
\keywords{Wildfire spread models, solid-fuel combustion, reaction-diffusion equations in combustion theory, existence and uniqueness of solutions to reaction-diffusion equations, time decay of solutions to reaction-diffusion equations}

\noindent
\msc{35K57, 80A25}


\section{Introduction}
\par In this article, I consider a reaction-diffusion model of wildfire propagation in a fuel layer with a very large spatial extent. Fix a spatial dimension $d\geq 1$. Consider a certain amount of solid fuel (trees, shrubberies, and so on) spread throughout $\mathbb{R}^d$ with dimensionless mass density at spatial location $x$ and time $t$ given by $Y(x,t)$. A fire with a dimensionless temperature field $T(x,t)$ is moving through space and consuming this fuel: temperature is normalized so that $T=0$ is the ambient temperature of the system's environment. Also, we suppose the reaction rate governing combustion is given by the Arrhenius law 
\begin{equation}\label{eqn:reaction_rate}
r(T) = \begin{cases}
e^{-\frac{1}{T}} & \quad T>0
\\
0 & \quad T\leq 0.
\end{cases}
\end{equation}
Under these assumptions, Weber et al. \cite{WMSG1997} were the first to model the evolution of $T$ and $Y$ using the following Cauchy problem: given two parameters $\lambda, \beta\geq0$, 
\begin{equation}\label{eqn:model_ivp_full}
\left\{
\begin{aligned}
    T_{t} &=  \left(\Delta - \lambda\right) T + Y r\left(T\right)
    \\
    Y_{t} &= -\beta Y r\left(T\right)
    \\
    T|_{t=0} &= T_0(x)
    \\
    Y|_{t=0} &= Y_0(x). 
\end{aligned}
\right.
\end{equation}
We always assume $Y_0(x)\geq 0$ everywhere. Up to a different choice of reaction rate, these are precisely the equations of first-order solid-fuel combustion at infinite Lewis number, with an added heat loss term \cite[\S 1.3]{BE1989}. Strictly speaking, in \cite{WMSG1997} and other work $Y(x,t)$ is actually the \emph{mass fraction} of fuel. However, I want to allow for an infinite initial amount of fuel, and in this situation the mass fraction does not make sense and it is more natural to regard $Y(x,t)$ as a mass density. For an instructive derivation of \eqref{eqn:model_ivp_full} and an explanation of why it is useful for data assimilation, see Mandel et al. \cite{Mandel_etal_2008}. The discussion in Mandel et al. highlights that $r(0)=0$ implies no combustion takes place at the ambient temperature $T=0$. Consequently, \eqref{eqn:model_ivp_full} admits solutions that take the form of travelling combustion waves; this had already been confirmed using formal asymptotics and numerics in \cite{WMSG1997}. 
\par In light of the existence of travelling wave solutions, \eqref{eqn:model_ivp_full} has attracted attention from researchers in both mathematical modelling and dynamical systems theory. On the more concrete side, \eqref{eqn:model_ivp_full} has been used to develop computationally efficient reduced-order models of wildfire dynamics \cite{BSU2021, LBGLL2016}. Also, Johnston \cite{Johnston2022} applied \eqref{eqn:model_ivp_full} in a modelling pipeline where remote sensing data was leveraged to estimate realistic model parameter values. Additionally, variants of the model incorporating the effects of wind \cite{BBH2009, Mandel_etal_2008} and radiative heat transfer \cite{AF2002, MPR2024, RNO2024, Weber1991} have also been considered. When it comes to rigorous results on the behaviour of solutions to \eqref{eqn:model_ivp_full}, Billingham \cite{Billingham2000} proved the existence of travelling waves given suitable initial data and $d=1$. Linear stability of these travelling waves was examined numerically by Varas and Vega \cite{VV2002}. Existence and linear stability of travelling waves for related combustion models in $d=1$ with \emph{finite} Lewis number has also been considered, see for example \cite{GJ2009, Razani2004}. There is certainly no shortage of other interesting work on travelling waves in wildland fire propagation and other solid-fuel systems, but a complete literature review is beyond the scope of this brief article. 
\par While there is plenty of mathematical work on the properties of travelling wave solutions to \eqref{eqn:model_ivp_full}, to my knowledge there are no published results on the existence and uniqueness of global-in-time solutions to \eqref{eqn:model_ivp_full} for \emph{generic} bounded initial data (see, however, the existence result for a related model in \cite[\S 4.4]{BE1989}). In particular, with the exception of the lumped-parameter analysis in \cite{RNO2024}, I have not found any estimates that preclude finite-time blow-up of $|T(x,t)|$. Thermal blow-up phenomena indeed arise in certain solid-fuel combustion models \cite[\S 3.2]{BE1989}
, so understanding whether or not blow-up occurs in solutions to \eqref{eqn:model_ivp_full} is a meaningful question. Also, in \cite{Mandel_etal_2008}, the authors use a heuristic lumped-parameter argument to predict that a fire with a sufficiently low temperature eventually burns itself out: in the jargon of combustion theory, this means the model predicts a nonzero ``auto-ignition temperature''. I have not, however, found any rigorous proof of the claim that \eqref{eqn:model_ivp_full} has a nonzero auto-ignition temperature. 
\par In this article, I prove three novel results to address the knowledge gaps discussed in the previous paragraph:
\par \noindent 
\begin{enumerate}
    \item for any bounded initial state $\left(T_0, Y_0\right)$ there exists a unique, bounded mild solution to \eqref{eqn:model_ivp_full} (to be defined precisely below) valid on the entire time interval $[0,\infty)$, provided $\lambda >0$: in particular, thermal blow-up is impossible (corollary \ref{cor:linfty_gwp});
    \item further, if $Y_0$ is also integrable and $T_0$ is also square-integrable, then the $L^2_x$ norm of $T(x,t)$ is bounded uniformly in time (corollary \ref{cor:linfty_gwp} as well);
    \item given an initial state $T_0$ that is small in a particular physically reasonable norm, the solution to \eqref{eqn:model_ivp_full} satisfies $\lim_{t\rightarrow 0}|T(x,t)| = 0$ uniformly in $x$ for every $\lambda \geq 0$: morally, this confirms that \eqref{eqn:model_ivp_full} indeed has a nonzero auto-ignition temperature, and that not \emph{every} initial state gives rise to a travelling wave solution (theorem \ref{thm:decay_for_lambda=0_nonradiating});
\end{enumerate} 
\ \par \noindent 
I hope these results prove useful to further investigations into the \emph{nonlinear} stability of travelling waves and to the development of rigorous error estimates for numerical discretizations of \eqref{eqn:model_ivp_full}. 
\subsection{Notation}
\noindent For definitions of any of the objects appearing below, see \cite{Evans2010}.  
\begin{itemize}
\item $d$ always denotes a positive integer greater than or equal to $1$. 
\item $\mathbb{N}_0=\left\{0,1,2,3,...\right\}$.
\item $C$ always denotes some nonnegative constant that may change from line to line. If $\alpha$ is some real parameter, then $C(\alpha)$ denotes a constant depending on $\alpha$. If $a,b\in \mathbb{R}$, we say that $a\lesssim b$ if there exists $C$ such that $a\leq bC$. If $a,b\in \mathbb{R}$, we say that $a\simeq b$ if $a\lesssim b$ and $b\lesssim a$. 
\item For $k\in \mathbb{N}_0$, and a nice function $\varphi\colon \mathbb{R}^d\rightarrow \mathbb{R}$, $D^{k}_{x}\varphi$ denotes the set of all order $k$ partial derivatives of $\varphi$. 
    \item For $p\in \left[1,\infty\right]$, $L^{p}_{x}\left(\mathbb{R}^{d}\right)$ denotes the usual Lebesgue space of real-valued functions of $x\in \mathbb{R}^d$. 
   \item For $k\in \mathbb{N}_0$, $H^{k}_{x}\left(\mathbb{R}^d\right)$ denotes the $L^2_{x}$-based inhomogeneous order $k$ Sobolev space of real-valued functions on $\mathbb{R}^d$, and $\dot{H}^{k}_{x}\left(\mathbb{R}^d\right)$ denotes the $L^2_{x}$-based homogeneous order $k$ Sobolev space of real-valued functions on $\mathbb{R}^d$. 
   \item Given a Banach space $A$ and a time interval $[0,t_{*}]$, we let $C^0_t A$ denote the space of continuous curves $[0,t_{*}] \rightarrow A$. When the value of $t_{*}$ has a substantial impact on the topic at hand, this value will be made clear. 
    \end{itemize}
\section{Results}
\subsection{Preliminaries}
\noindent First, we recall some decay and smoothing estimates for the linear heat equation.
\begin{lemma}\label{lemma:heat_flow_est}
\par \noindent
   \begin{enumerate}
       \item Fix any $p\in [1,\infty]$. For any $\varphi\in L^p_{x}$,
    \begin{equation}\label{eqn:weak_damping_estimate}
        \left\|e^{t\Delta}\varphi\right\|_{L^p_{x}} \leq \left\|\varphi\right\|_{L^p_{x}} \quad \forall \ t>0. 
    \end{equation}
    \item For any $p,q,\ell\in \left[1,\infty\right]$ with $p^{-1}=q^{-1}-\ell^{-1}$, any $\varphi\in L^q_{x}$, and any $k\in \mathbb{N}_0$, we have 
    \begin{equation}\label{eqn:smoothing_estimate}
        \left\|D^{k}_{x} e^{t\Delta}\varphi\right\|_{L^p_x} \lesssim t^{-\frac12\left(\frac{d}{\ell}+k\right)}\left\|\varphi\right\|_{L^q_{x}}  \quad \forall \ t>0. 
    \end{equation}
   \end{enumerate} 
\end{lemma}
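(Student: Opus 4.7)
My plan is to obtain both estimates by writing $e^{t\Delta}\varphi$ as a convolution against the heat kernel
\[
K_{t}(x) = (4\pi t)^{-d/2}\,e^{-|x|^{2}/(4t)}, \qquad e^{t\Delta}\varphi = K_{t} * \varphi,
\]
and then applying Young's convolution inequality. The inputs I need are (a) that $K_{t}$ is a nonnegative probability density, so $\|K_{t}\|_{L^{1}_{x}} = 1$ for every $t>0$, and (b) the parabolic scaling identity $K_{t}(x) = t^{-d/2} K_{1}(x/\sqrt{t})$, which passes to any derivative as $D^{k}_{x}K_{t}(x) = t^{-d/2-k/2}(D^{k}_{x}K_{1})(x/\sqrt{t})$.

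For part (i), I take Young's inequality in the form $\|K_{t}*\varphi\|_{L^{p}_{x}} \leq \|K_{t}\|_{L^{1}_{x}}\,\|\varphi\|_{L^{p}_{x}}$ and simply plug in $\|K_{t}\|_{L^{1}_{x}}=1$. Since this works for every $p\in[1,\infty]$ it covers both the Lebesgue endpoints at once.

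For part (ii), I use Young's inequality in the form $\|K * \varphi\|_{L^{p}_{x}} \leq \|K\|_{L^{r}_{x}}\|\varphi\|_{L^{q}_{x}}$ with $1 + p^{-1} = r^{-1} + q^{-1}$. Combined with the assumption $p^{-1}=q^{-1}-\ell^{-1}$, this forces the choice $r^{-1} = 1-\ell^{-1}$, i.e.\ $r = \ell/(\ell-1)$. Applying this to $K = D^{k}_{x}K_{t}$ reduces the task to computing $\|D^{k}_{x}K_{t}\|_{L^{r}_{x}}$. Here I change variables $y = x/\sqrt{t}$ in the scaling identity; a short calculation gives
\[
\|D^{k}_{x}K_{t}\|_{L^{r}_{x}} = t^{-k/2}\,t^{-\frac{d}{2}(1-1/r)}\,\|D^{k}_{x}K_{1}\|_{L^{r}_{x}} = t^{-\frac12\left(\frac{d}{\ell}+k\right)}\,\|D^{k}_{x}K_{1}\|_{L^{r}_{x}},
\]
and the constant $\|D^{k}_{x}K_{1}\|_{L^{r}_{x}}$ is finite because $K_{1}$ and all its partial derivatives are Schwartz class. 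Plugging this into Young's inequality yields the claimed bound.

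There is no serious obstacle here: both assertions are textbook consequences of the explicit Gaussian heat kernel and Young's convolution inequality, and the only mildly nontrivial bookkeeping is matching Young's exponent $r$ with the Sobolev scaling exponent $\ell$, which I handle as above.
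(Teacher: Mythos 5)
Your proof is correct and is essentially the argument the paper has in mind: the paper simply cites the standard heat-kernel decay estimates (referring to \cite[pp.~41-42]{LP2015}), and your write-up via $e^{t\Delta}\varphi = K_t * \varphi$, Young's inequality with $1+p^{-1}=r^{-1}+q^{-1}$, and the parabolic scaling $\left\|D^k_x K_t\right\|_{L^r_x} = t^{-\frac12\left(\frac{d}{\ell}+k\right)}\left\|D^k_x K_1\right\|_{L^r_x}$ is precisely the textbook proof being invoked, with the exponent bookkeeping done correctly. No gaps.
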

\begin{proof}
All these bounds follow from the time-decay properties of the heat kernel on $\mathbb{R}^d$; for example, see \cite[pp. 41-42]{LP2015} for an outline of how to prove \eqref{eqn:smoothing_estimate}.  
\end{proof}

 \noindent Additionally, we need a result about the function $r(T)$ defined in \eqref{eqn:reaction_rate}.
\begin{lemma}\label{lemma:reaction_rate}
    For any $p\in \mathbb{N}_0$, there exists a constant $C(p)$ such that 
    $$
    r(T) \leq C(p) T^{p} \quad \forall \ T\in [0,1].
    $$
\end{lemma}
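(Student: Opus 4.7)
The statement is the familiar fact that $e^{-1/T}$ vanishes to all orders at $T=0^+$, so the plan is simply to make this quantitative on the interval $(0,1]$ and handle $T=0$ as a trivial edge case. Since $r(0)=0$, any bound of the form $r(T)\leq C(p)T^p$ holds automatically at $T=0$ (for $p=0$ one just needs $C(0)\geq 0$), so the entire content of the lemma is a bound on $e^{-1/T}$ for $T\in (0,1]$.

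The cleanest route is to use the Taylor expansion of the exponential. For any $T>0$, every term of the series $e^{1/T}=\sum_{k\geq 0}\tfrac{1}{k!\,T^{k}}$ is nonnegative, so retaining only the $(p+1)$-th term gives
\begin{equation*}
e^{1/T}\;\geq\;\frac{1}{(p+1)!\,T^{p+1}},\qquad T>0,
\end{equation*}
which rearranges to $e^{-1/T}\leq (p+1)!\,T^{p+1}$. Restricting to $T\in (0,1]$ and using $T^{p+1}\leq T^{p}$, we obtain $r(T)\leq (p+1)!\,T^{p}$, so $C(p):=(p+1)!$ works.

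An equivalent derivation, useful as a sanity check, is to substitute $s=1/T\geq 1$ and observe that $r(T)/T^{p}=s^{p}e^{-s}$; elementary calculus shows $s^{p}e^{-s}$ attains its maximum on $[1,\infty)$ at $s=\max(p,1)$ and is therefore bounded by a constant depending only on $p$. I would present the Taylor-series version in the paper, since it immediately yields the explicit constant $(p+1)!$ with no case analysis.

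There is no real obstacle here; the only minor subtlety is remembering to absorb the extra power of $T$ using $T\leq 1$, which is what allows a single constant to dominate $r$ by $T^{p}$ rather than by $T^{p+1}$. The restriction $T\in[0,1]$ is essential: without it, the polynomial bound would fail as $T\to\infty$ since $r(T)\to 1$ while $T^{p}\to\infty$ only for $p\geq 1$, and even there the ratio $T^{p}/r(T)$ blows up.
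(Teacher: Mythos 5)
Your proof is correct, and it is genuinely more substantive than the paper's: the paper simply declares the lemma ``obvious'' by appealing to the standard fact that $e^{-1/T}$ vanishes to all orders at $T=0$, offering no quantitative argument. Your Taylor-series route --- retaining the single term $\frac{1}{(p+1)!\,T^{p+1}}$ from the everywhere-nonnegative series for $e^{1/T}$, rearranging to $e^{-1/T}\leq (p+1)!\,T^{p+1}$, and then absorbing one power of $T$ via $T\leq 1$ --- yields the explicit constant $C(p)=(p+1)!$, which the paper never exhibits. The substitution $s=1/T$ with the elementary maximization of $s^{p}e^{-s}$ on $[1,\infty)$ is an equally valid alternative and gives the sharper constant $\max(p,1)^{p}e^{-\max(p,1)}$ on that range, at the cost of a small calculus argument. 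Either version would serve as a complete replacement for the paper's one-liner; what your approach buys is an effective bound, which could matter if one wanted to track the size of the constants entering proposition \ref{prop:bootstrap_close} (where the lemma is invoked with $p=11$).

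One side remark in your write-up is wrong, though it does not affect the proof: the restriction $T\in[0,1]$ is \emph{not} essential for the truth of the inequality. Since $r(T)\leq 1\leq T^{p}$ for all $T\geq 1$ and $p\in\mathbb{N}_0$, your bound $r(T)\leq (p+1)!\,T^{p}$ in fact extends to all of $[0,\infty)$. The ratio $T^{p}/r(T)$ blowing up as $T\to\infty$ is harmless --- the claimed inequality is $r(T)\leq C(p)T^{p}$, which only becomes easier to satisfy when $T^{p}$ dominates. The interval $[0,1]$ in the statement reflects where the lemma is actually applied (small temperatures in the bootstrap argument), not a genuine obstruction at large $T$.
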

\begin{proof}
    This is obvious since $r(T)$ vanishes to all orders at $T=0$: it is the canonical example of a smooth, non-analytic function. 
\end{proof}

\subsection{Mild Solutions and their Basic Properties}
\noindent We may use Duhamel's principle to re-formulate \eqref{eqn:model_ivp_full} in a way that lets us introduce solutions with limited regularity: 
\begin{defn} Let $A, A_0$, and $B$ be Banach subspaces of $L^{\infty}_{x}$, with $A \subseteq A_0$. Suppose we are given an initial datum $\left(T_0(x), Y_0(x)\right)\in A_0 \times B$. We say that $\left(T(x,t), Y(x,t)\right)\in C^0_t A \times C^0_t B$ is a \textbf{mild solution} of \eqref{eqn:model_ivp_full} if this pair satisfies
\begin{subequations}\label{eqn:mild_soln}
\begin{align}
    T(x,t) &= e^{-\lambda t}e^{t\Delta}T_0 + \int_{0}^{t} e^{-\lambda\left(t-\tau\right)}e^{\left(t-\tau\right)\Delta} Y(x,\tau)r\left(T(x,\tau)\right) \ \diff \tau
    \\
    Y(x,t) &= Y_0(x) \exp\left(-\beta\int_0^t r\left(T(x,\sigma)\right) \ \diff \sigma\right). 
\end{align}
\end{subequations}
\end{defn}
\noindent Below, the time interval on which a mild solution is valid will be specified when it's vital to know. Immediately, one sees that mild solutions have some important properties. 
\par \noindent 
\begin{enumerate}
    \item First, if $Y_0(x)\geq 0$ everywhere then $Y(x,t)\geq 0$ also holds. This makes perfect sense: an initially nonnegative fuel density should never become negative. 
    \item Second, owing to the smoothing properties of the heat flow $e^{t\Delta}$, $T(x,t)$ is smooth in $x$ for every $t>0$. However, $Y(x,t)$ is only as smooth as $Y_0(x)$.
\end{enumerate}

\subsection{\textit{A Priori} Estimates and Existence-Uniqueness Theory}
\noindent In this subsection I prove some basic \textit{a priori} bounds on the Lebesgue and Sobolev norms of mild solutions to \eqref{eqn:model_ivp_full} with $\lambda>0$. These bounds are then used to help construct global-in-time mild solutions of \eqref{eqn:model_ivp_full}. We start with the easiest bound possible: 
\begin{prop} \label{prop:fuel_decay} For any $p\in [1,\infty]$, if $(T,Y)$ is a mild solution of \eqref{eqn:model_ivp_full}, then 
\begin{equation}
    \sup_{t>0}\left\|Y(x,t)\right\|_{L^p_{x}} \leq  \left\|Y_0(x)\right\|_{L^p_{x}}
\end{equation}
provided the right-hand side is finite.
\end{prop}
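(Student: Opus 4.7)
The proof is essentially a direct pointwise comparison, so no heavy machinery is required. My plan is to read the estimate off of the explicit Duhamel representation of $Y$ given in \eqref{eqn:mild_soln}.

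First I would observe that the reaction rate defined in \eqref{eqn:reaction_rate} is nonnegative: $r(T) = e^{-1/T} \geq 0$ for $T>0$ and $r(T)=0$ otherwise. Since we are assuming $\beta \geq 0$, this means
\[
\beta \int_0^t r\bigl(T(x,\sigma)\bigr)\, \diff \sigma \geq 0
\]
for every $x$ and every $t\geq 0$. Consequently, the exponential factor in the formula for $Y(x,t)$ lies in $(0,1]$ pointwise.

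Next I would combine this with the standing hypothesis $Y_0(x)\geq 0$ everywhere, which was stated just after \eqref{eqn:model_ivp_full}. Multiplying the nonnegative function $Y_0(x)$ by the exponential factor bounded above by $1$ yields the pointwise sandwich
\[
0 \;\leq\; Y(x,t) \;\leq\; Y_0(x) \qquad \forall \ x\in \mathbb{R}^d,\ t\geq 0.
\]
Monotonicity of the Lebesgue norm with respect to pointwise nonnegative functions then immediately gives $\|Y(\cdot,t)\|_{L^p_x} \leq \|Y_0\|_{L^p_x}$ for every $p\in[1,\infty]$, uniformly in $t$, which is the claim.

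The main (and only) ``obstacle'' is simply to record that the sign of $r$ and the sign of $\beta$ conspire to make the exponential a contraction, and that $Y_0\geq 0$ propagates to $Y\geq 0$. There is no continuity-in-$t$ issue because the integrand $r(T(x,\sigma))$ is bounded (since $r$ is bounded on all of $\mathbb{R}$ by $1$), so the time integral is well defined and the exponential representation makes sense even for merely bounded $T$. No use is made of $\lambda$, and the finiteness of $\|Y_0\|_{L^p_x}$ is invoked only so the right-hand side of the inequality is not vacuous.
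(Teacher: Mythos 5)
Your proposal is correct and is essentially the paper's own argument: the paper's one-line proof (``since $r(T)$ is nonnegative, $Y(x,t)\leq Y_0(x)$ almost everywhere'') is exactly your pointwise sandwich $0\leq Y(x,t)\leq Y_0(x)$ read off from the explicit exponential formula in \eqref{eqn:mild_soln}, followed by monotonicity of the $L^p_x$ norm. You have simply recorded the implicit details (sign of $\beta$, $Y_0\geq 0$, well-definedness of the time integral) more explicitly.
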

\begin{proof}
Since $r(T)$ is nonnegative, $Y(x,t)\leq Y_0(x)$ almost everywhere. 
\end{proof}
\noindent According to Mandel et al. \cite[\S 4.1]{Mandel_etal_2008}, $Y(x,t)$ is expected to settle to a nonzero steady value for $t\gg 1$. I therefore do not think one can prove a variant of the above result that includes a time-dependent decay factor on the right-hand side. 
\begin{prop}\label{prop:linfty_est}
Suppose $\lambda>0$. If we are given an initial datum $(T_0, Y_0)\in L^{\infty}_{x}\times L^{\infty}_{x}$ and a corresponding mild solution $(T,Y)\in C^{0}_{t}L^{\infty}_{x}\times C^{0}_{t}L^{\infty}_{x}$ of \eqref{eqn:model_ivp_full}, then 
\begin{equation}
 \left\|T(x,t)\right\|_{L^{\infty}_{x}} \leq e^{-\lambda t}\left\|T_0(x)\right\|_{L^{\infty}_{x}} + \lambda^{-1}\left\|Y_0(x)\right\|_{L^{\infty}_{x}}. 
\end{equation}
\end{prop}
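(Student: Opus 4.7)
The plan is to apply the $L^\infty$ norm directly to the Duhamel formula for $T$ in the mild solution \eqref{eqn:mild_soln} and estimate the two resulting terms using the heat semigroup bound from Lemma \ref{lemma:heat_flow_est}(i) together with two elementary pointwise bounds on the nonlinearity $Y\,r(T)$.

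First I would note the two pointwise facts that drive the estimate: (a) from the definition \eqref{eqn:reaction_rate} we have $0 \leq r(s) \leq 1$ for every $s\in\mathbb{R}$, since either $r(s)=0$ or $r(s)=e^{-1/s}<1$; and (b) by Proposition \ref{prop:fuel_decay} (or directly from the explicit formula for $Y(x,t)$, as $\exp(-\beta\int_0^t r\,d\sigma)\in[0,1]$), we have $0\leq Y(x,\tau)\leq Y_0(x)$ for almost every $x$ and every $\tau\in[0,t]$. Combining (a) and (b), the integrand of the Duhamel term obeys
\begin{equation}
0 \leq Y(x,\tau)\, r\!\left(T(x,\tau)\right) \leq Y_0(x) \leq \|Y_0\|_{L^\infty_x} \quad \text{for a.e. } x, \ \tau\in[0,t].
\end{equation}

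Next I would take $L^\infty_x$ norms on both sides of the mild-solution formula
\begin{equation}
T(x,t) = e^{-\lambda t}e^{t\Delta}T_0 + \int_{0}^{t} e^{-\lambda(t-\tau)}e^{(t-\tau)\Delta}\bigl[Y(x,\tau)\,r(T(x,\tau))\bigr]\,\diff\tau,
\end{equation}
move the norm inside the time integral by the triangle inequality for Bochner integrals, and apply the contraction bound \eqref{eqn:weak_damping_estimate} with $p=\infty$ to each heat-semigroup factor. The linear term contributes $e^{-\lambda t}\|T_0\|_{L^\infty_x}$, and the nonlinear term is controlled by $\int_0^t e^{-\lambda(t-\tau)}\|Y_0\|_{L^\infty_x}\,\diff\tau \leq \lambda^{-1}\|Y_0\|_{L^\infty_x}$, where positivity of $\lambda$ is precisely what allows this time integral to be uniformly bounded. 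Adding the two contributions gives the claimed bound.

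There is no real obstacle here: the argument is purely a bookkeeping exercise in applying $L^\infty$-contractivity of the heat semigroup together with the uniform boundedness of $r$ and the monotone decay of $Y$. The only subtlety worth flagging explicitly in the write-up is that the whole estimate depends on $\lambda>0$, and that the crude pointwise bound $r(T)\leq 1$ (rather than any sharper estimate, such as the one from Lemma \ref{lemma:reaction_rate} for small $T$) is what makes the argument work uniformly, with no a priori knowledge of $\|T\|_{L^\infty_x}$ needed beyond the postulated membership $T\in C^0_t L^\infty_x$.
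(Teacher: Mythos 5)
Your proposal is correct and follows essentially the same route as the paper: both take $L^\infty_x$ norms of the Duhamel formula, apply the contraction bound \eqref{eqn:weak_damping_estimate} with $p=\infty$, use $0\leq r(T)\leq 1$ and $Y\leq Y_0$ pointwise, and bound $\int_0^t e^{-\lambda(t-\tau)}\,\diff\tau$ by $\lambda^{-1}$. The only cosmetic difference is that the paper retains the slightly sharper factor $\lambda^{-1}\bigl(1-e^{-\lambda t}\bigr)$ before discarding it, while you pass directly to $\lambda^{-1}$.
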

\begin{proof}
    Using lemma \ref{lemma:heat_flow_est} and $\left|r(T)\right|\leq 1$, we have
    \begin{align*}
        \left\|T\right\|_{L^{\infty}_{x}} &\leq e^{-\lambda t} \left\|T_0\right\|_{L^{\infty}_{x}} + \sup_{t>0}\left\|Y r\left(T\right)\right\|_{L^\infty_x}\int_0^t e^{-\lambda \left(t-\tau\right)} \ \diff \tau
        \leq e^{-\lambda t} \left\|T_0\right\|_{L^{\infty}_{x}} + \lambda^{-1}\left(1-e^{-\lambda t}\right)\left\|Y_0\right\|_{L^\infty_x}. 
    \end{align*}
\end{proof}

\begin{prop} \label{prop:l2_est} 
    Suppose $\lambda >0$. If we are given an initial datum $(T_0, Y_0)\in \left(L^{2}_{x}\cap L^{\infty}_{x}\right)\times L^{\infty}_{x}$ and a corresponding mild solution $(T,Y)\in C^{0}_{t}\left(L^{2}_{x}\cap L^{\infty}_{x}\right)\times C^{0}_{t}L^{\infty}_{x}$ of \eqref{eqn:model_ivp_full}, then
    \begin{equation}\label{eqn:general_L2_estimate}
        \left\|T\right\|_{L^2_{x}}^2 \lesssim \min\left\{\left\|T_0\right\|_{L^2_{x}}^2 + C\left(\left\|T_0\right\|_{L^{\infty}_{x}}, \left\|Y_0\right\|_{ L^{\infty}_{x}}, \lambda, \beta\right)\left\|Y_0- Y\right\|_{L^1_{x}}, \ \left\|T_0\right\|_{L^2_{x}}^2 e^{\left(\left\|Y_0\right\|_{L^{\infty}_{x}}-\lambda\right)t}\right\}
    \end{equation}
    In particular, if $(T_0, Y_0)\in \left(L^{2}_{x}\cap L^{\infty}_{x}\right)\times \left(L^{1}_{x}\cap L^{\infty}_{x}\right)$ then
    \begin{equation}\label{eqn:special_L2_estimate}
        \sup_{t>0}\left\|T(x,t)\right\|_{L^2_{x}} \leq C\left(\left\|T_0\right\|_{L^2_{x}\cap L^{\infty}_{x}}, \left\|Y_0\right\|_{L^1_{x}\cap L^{\infty}_{x}}, \lambda, \beta\right). 
    \end{equation}
\end{prop}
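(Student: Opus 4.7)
My plan is to prove the two quantities appearing inside the $\min$ separately, working throughout from the mild formulation \eqref{eqn:mild_soln} and invoking the $L^2_x$-contractivity of $e^{t\Delta}$ from Lemma \ref{lemma:heat_flow_est}. The conclusion \eqref{eqn:special_L2_estimate} then follows immediately, since the pointwise bound $0 \leq Y_0 - Y \leq Y_0$ forces $\|Y_0 - Y(t)\|_{L^1_x} \leq \|Y_0\|_{L^1_x}$ whenever $Y_0 \in L^1_x$.

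For the exponential bound I take $L^2_x$-norms in Duhamel's formula for $T$, apply \eqref{eqn:weak_damping_estimate} with $p = 2$, and close using Grönwall. The key pointwise inequality is $0 \leq r(T) \leq C|T|$ for every $T \in \mathbb{R}$, which I get by patching three cases: $r(T) = 0$ for $T \leq 0$, Lemma \ref{lemma:reaction_rate} with $p = 1$ for $T \in [0,1]$, and $r(T) \leq 1 \leq T$ for $T \geq 1$. Combined with $0 \leq Y \leq \|Y_0\|_{L^\infty_x}$, this yields $\|Yr(T)\|_{L^2_x} \lesssim \|Y_0\|_{L^\infty_x}\|T\|_{L^2_x}$, and Grönwall applied to $g(t) := e^{\lambda t}\|T(t)\|_{L^2_x}$ finishes the job.

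For the consumption-based bound I want to avoid any PDE-level energy estimate, since differentiating the mild solution in time would demand extra justification. Instead I work directly with Duhamel. After the split $\|T\|_{L^2_x}^2 \leq 2\|e^{-\lambda t}e^{t\Delta}T_0\|_{L^2_x}^2 + 2\|\int \cdots\|_{L^2_x}^2$ and an application of Minkowski to the Duhamel integral, Cauchy-Schwarz in $\tau$ gives
\[\left(\int_0^t e^{-\lambda(t-\tau)}\|Yr(T)\|_{L^2_x}\, d\tau\right)^{\!2} \leq \lambda^{-1}\int_0^t e^{-\lambda(t-\tau)}\|Yr(T)\|_{L^2_x}^2\, d\tau.\]
The most delicate step is the elementary interpolation $\|Yr(T)\|_{L^2_x}^2 \leq \|Yr(T)\|_{L^\infty_x}\|Yr(T)\|_{L^1_x} \leq \|Y_0\|_{L^\infty_x}\|Yr(T)\|_{L^1_x}$, which uses $Y \leq Y_0$ and $r(T) \leq 1$ to trade an $L^2$ factor for an $L^1$ factor at the cost of $\|Y_0\|_{L^\infty_x}$. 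Finally, integrating the fuel equation $Y_\tau = -\beta Yr(T)$ in space and time yields the crucial identity $\int_0^t \|Yr(T)(\tau)\|_{L^1_x}\, d\tau = \beta^{-1}\|Y_0 - Y(t)\|_{L^1_x}$, which closes the estimate with constant $C \simeq \lambda^{-1}\beta^{-1}\|Y_0\|_{L^\infty_x}$. I expect that interpolation step together with the fuel-consumption identity to be the chief obstacle; once one sees how to bypass a brute-force energy estimate, the rest is routine semigroup bookkeeping.
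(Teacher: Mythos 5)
Your proof is correct, and it reaches \eqref{eqn:general_L2_estimate} by a genuinely different route than the paper. For the consumption term, the paper runs a classical energy argument: it differentiates $\tfrac12\|T\|_{L^2_x}^2$ in time (leaning on spatial smoothness of mild solutions, with equality ``understood in an $L^2_x$ sense''), discards the dissipative terms, rewrites $\int T\,Y r(T)\,\diff x = -\beta^{-1}\int T\,Y_t\,\diff x$, and controls the factor $T$ in $L^\infty_x$ via proposition \ref{prop:linfty_est} before integrating in time and applying Fubini. You instead stay entirely at the level of the Duhamel formula \eqref{eqn:mild_soln}: Cauchy--Schwarz in $\tau$ against the damping kernel $e^{-\lambda(t-\tau)}$, the interpolation $\|Yr(T)\|_{L^2_x}^2 \leq \|Yr(T)\|_{L^\infty_x}\|Yr(T)\|_{L^1_x}$, and the same fuel-consumption identity $\int_0^t \|Yr(T)(\tau)\|_{L^1_x}\,\diff\tau = \beta^{-1}\|Y_0 - Y(t)\|_{L^1_x}$, which for $Y$ is legitimately obtained by differentiating the \emph{explicit} exponential formula in \eqref{eqn:mild_soln} rather than the PDE. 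Your route buys two things: it never differentiates $T$ in time, so the regularity justification the paper gestures at is avoided entirely; and your constant $C \simeq \lambda^{-1}\beta^{-1}\|Y_0\|_{L^\infty_x}$ is cleaner than the paper's, since it does not involve $\|T_0\|_{L^\infty_x}$ and never invokes proposition \ref{prop:linfty_est}. What the paper's energy method buys in exchange is brevity and the (here unused) dissipation term $-\|T\|_{\dot H^1_x}^2$, which is the natural starting point for any sharper analysis. For the exponential component the two proofs are the same Gr\"{o}nwall argument in different clothing --- your integral form with $g(t) = e^{\lambda t}\|T(t)\|_{L^2_x}$ versus the paper's differential inequality with $Tr(T)\lesssim T^2$ --- and both deliver an exponent of the form $2\left(C\|Y_0\|_{L^\infty_x}-\lambda\right)t$ rather than literally $\left(\|Y_0\|_{L^\infty_x}-\lambda\right)t$, so your version is no looser than the paper's own. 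One point worth recording explicitly in a final writeup: when $\|Y_0 - Y(t)\|_{L^1_x} = \infty$ the first entry of the minimum is vacuous, and when it is finite the monotonicity $0 \leq Y_0 - Y(\tau) \leq Y_0 - Y(t)$ for $\tau \leq t$ guarantees $\|Yr(T)(\tau)\|_{L^1_x} < \infty$ for a.e.\ $\tau \in [0,t]$, which is what makes your interpolation step meaningful; your draft assumes this implicitly, and it is a one-line fix.
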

\begin{proof}
    Recall that $T(x,t)$ is smooth in $x$, so we can write
    $T_{t} = \left(\Delta-\lambda\right) T + Yr\left(T\right)$
    where equality is understood in an $L^2_x$ sense. We then use a standard energy argument:  
    \begin{align}
    \frac{\diff}{\diff t}\frac12\left\|T\right\|_{L^2_{x}}^2 &= - \left\|T\right\|^2_{\dot{H}^1_{x}} -\frac{\lambda}{2}\left\|T\right\|^2_{L^2_x}+ \int T(x,t) \ Y(x,t) \ r\left(T(x,t)\right) \ \diff x \nonumber
    \\
    &\leq  \int T(x,t) \ Y(x,t) \ r\left(T(x,t)\right) \ \diff x \label{eqn:funny_line}
    \\
    &= \beta^{-1} \int T(x,t) \ \left(-Y_{t}(x,t)\right) \ \diff x \nonumber
    \\
    &\leq \beta^{-1}\sup_{t>0}\left\|T(x,t)\right\|_{L^{\infty}_{x}}\int  \left(-Y_{t}(x,t)\right) \ \diff x. \nonumber
    \end{align}
    Integrating both sides of the above over $[0,t]$ and using proposition \ref{prop:linfty_est} gives 
    \begin{align*}
        \left\|T(x,t)\right\|_{L^2_{x}}^2 \lesssim \left\|T_0(x)\right\|_{L^2_{x}}^2 + C\left(\left\|T_0\right\|_{L^{\infty}_{x}}, \left\|Y_0\right\|_{ L^{\infty}_{x}}, \lambda, \beta\right)\int_0^t\int \left(-Y_{\tau}(x,\tau)\right) \ \diff x \ \diff \tau.
    \end{align*}
    Then, we use Fubini's theorem and the fundamental theorem of calculus to obtain one part of \eqref{eqn:general_L2_estimate}. The special case \eqref{eqn:special_L2_estimate} follows since $Y_0(x) - Y(x,t) \leq Y_0(x)$. For the other part of \eqref{eqn:general_L2_estimate}, we keep the $\lambda$ term around, instead use $Tr(T)\lesssim T^2$ in \eqref{eqn:funny_line}, place $Y$ in $L^{\infty}_{x}$, and conclude with an application of Gr\"{o}nwall's inequality. 
\end{proof}
\par Next, I demonstrate how these \textit{a priori} estimates may be applied to obtain global-in-time mild solutions to \eqref{eqn:model_ivp_full}. 
\begin{cor}\label{cor:linfty_gwp}
      Suppose $\lambda>0$ and we are given an initial datum $\left(T_0, Y_0\right)\in L^{\infty}_{x}\times L^{\infty}_{x}$. Then, \eqref{eqn:model_ivp_full} admits a unique mild solution $(T,Y)\in C^0_t L^{\infty}_{x}\times C^0_t L^{\infty}_{x}$ valid for $t\in [0,\infty)$. Further, if we also have $T_0\in L^2_{x}$ and $Y_0\in L^1_x$, then the mild solution obeys \eqref{eqn:special_L2_estimate}. 
\end{cor}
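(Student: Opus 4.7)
The plan is the standard two-step approach: build a local mild solution by Picard iteration, then use the \emph{a priori} $L^\infty$ bounds already in hand (Propositions \ref{prop:fuel_decay} and \ref{prop:linfty_est}) to stitch together a global solution with a time step that does not collapse. The final $L^2$ assertion will be immediate from Proposition \ref{prop:l2_est} applied to the global solution constructed this way.

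For the local step, I would fix constants $M \geq \|T_0\|_{L^\infty_x} + \lambda^{-1}\|Y_0\|_{L^\infty_x}$ and $N \geq \|Y_0\|_{L^\infty_x}$, and view the right-hand sides of \eqref{eqn:mild_soln} as defining a nonlinear map $\Phi = (\Phi_1, \Phi_2)$ on the closed ball $\mathcal{B}_{t_*} = \{(T,Y) : \sup_{0\leq t\leq t_*}\|T(\cdot,t)\|_{L^\infty_x} \leq M, \ \sup_{0\leq t\leq t_*}\|Y(\cdot,t)\|_{L^\infty_x} \leq N\}$ inside $C^0_t L^\infty_x \times C^0_t L^\infty_x$ over a short interval $[0, t_*]$. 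The self-mapping property is a direct computation using Lemma \ref{lemma:heat_flow_est} together with the trivial bounds $0 \leq r \leq 1$ and $Y \leq Y_0$. For the contraction property I would exploit that $r$ is smooth and hence Lipschitz on the bounded range of interest, together with the elementary inequality $|e^{-a}-e^{-b}| \leq |a-b|$ for $a,b\geq 0$; the resulting estimate on $\Phi(T_1,Y_1) - \Phi(T_2,Y_2)$ carries a prefactor of order $t_*$, so shrinking $t_*$ (by an amount depending only on $M$, $N$, $\lambda$, $\beta$) makes $\Phi$ a strict contraction and Banach's fixed-point theorem delivers local existence and uniqueness.

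Extension to $[0,\infty)$ is then immediate from the observation that this local existence time depends only on $M$, $N$, $\lambda$, and $\beta$, while Propositions \ref{prop:fuel_decay} and \ref{prop:linfty_est} force $\|T(\cdot, t_*)\|_{L^\infty_x} \leq M$ and $\|Y(\cdot, t_*)\|_{L^\infty_x} \leq N$. Thus restarting the local theory with $(T(\cdot, t_*), Y(\cdot, t_*))$ as new initial data produces a continuation of the same length, and iterating yields a mild solution on all of $[0,\infty)$. Uniqueness in the class $C^0_t L^\infty_x \times C^0_t L^\infty_x$ is preserved by the same continuation argument, or can alternatively be obtained from a direct Gr\"{o}nwall estimate on the difference of two putative mild solutions.

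The main technical wrinkle I anticipate lies in the continuity-in-time requirement in the topology of $L^\infty_x$: the heat semigroup $e^{t\Delta}$ is not strongly continuous on $L^\infty_x$, so strictly speaking one must either restrict to a subspace such as $\mathrm{BUC}(\mathbb{R}^d)$ on which strong continuity holds, or reinterpret $C^0_t L^\infty_x$ in a suitable weak-$\ast$ sense. This is a bookkeeping matter that the author's flexible definition of mild solution appears designed to accommodate, and it does not affect the core contraction machinery.
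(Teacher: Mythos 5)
Your proposal is correct and follows essentially the same route as the paper's proof, which likewise invokes fixed-point iteration for local existence-uniqueness, Proposition \ref{prop:linfty_est} for continuation to $[0,\infty)$, and Proposition \ref{prop:l2_est} for the $L^2$ bound --- your write-up simply supplies the contraction details (Lipschitz bound on $r$, the inequality $|e^{-a}-e^{-b}|\leq|a-b|$, and a local time step depending only on $M$, $N$, $\lambda$, $\beta$) that the paper leaves implicit. Your remark on the failure of strong continuity of $e^{t\Delta}$ on $L^{\infty}_{x}$ is a legitimate refinement the paper does not address, and your proposed fixes are the standard ones.
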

\begin{proof}
    Fixed-point iteration (see for example \cite{Tao2006}) can be used to established local-in-time existence and uniqueness. Proposition \ref{prop:linfty_est} allows us to extend this local solution to a global one. With this solution in hand, we apply proposition \ref{prop:l2_est} to conclude.
\end{proof}
\noindent Consequently, solutions to \eqref{eqn:model_ivp_full} do not exhibit thermal blow-up when $\lambda>0$. In the case $\lambda=0$ the techniques from proposition \ref{prop:linfty_est} would yield
\begin{equation*}
 \left\|T(x,t)\right\|_{L^{\infty}_{x}} \leq\left\|T_0(x)\right\|_{L^{\infty}_{x}} + t\left\|Y_0(x)\right\|_{L^{\infty}_{x}}. 
\end{equation*}
While this does not give a nice uniform-in-time bound, it at least says that thermal blow-up does not occur in finite time irrespective of $\lambda$. 
\par Even though the mild solutions constructed above are smooth with respect to $x$, they are not necessarily classical solutions (that is, they are not also continuously differentiable in $t$). Indeed, the PDE $T_t = \Delta T + Yr(T)$ tells us that $T_t$ only has the spatial regularity of $Y$, itself controlled by the regularity of $Y_0$. However, if $Y_0$ is continuous, then so is $T_t$, and we indeed have a classical solution. In particular, the choice $Y_0\equiv 1$ used in \cite{Mandel_etal_2008} gives rise to a unique global classical solution.
\par For the sake of completeness, I mention that we can obtain coarse estimates on $\nabla T$ without too much trouble:
\begin{prop}
Pick $p,q,\ell \in \left[1,\infty\right]$ with $p^{-1}=q^{-1}-\ell^{-1}$ and $\ell>d$. If $(T_0, Y_0)\in \left(L^{p}_{x}\cap L^{\infty}_{x}\right)\times \left(L^{q}_{x}\cap L^{\infty}_{x}\right)$ then the corresponding mild solution  $(T,Y)$ of \eqref{eqn:model_ivp_full} satisfies
    \begin{equation}\label{eqn:special_smoothing_est}
     \left\|\nabla T(x,t)\right\|_{L^p_{x}} \lesssim t^{-\frac12} \left\|T_0(x)\right\|_{L^p_x} + t^{\frac12\left(1-\frac{d}{\ell}\right)}\left\|Y_0\right\|_{L^q_x} \quad \forall \ t>0.
    \end{equation}
In particular, 
 \begin{equation}\label{eqn:linfty_smoothing_est}
     \left\|\nabla T(x,t)\right\|_{L^\infty_{x}} \lesssim t^{-\frac12} \left\|T_0(x)\right\|_{L^\infty_x} + t^{\frac12}\left\|Y_0\right\|_{L^\infty_x} \quad \forall \ t>0.
    \end{equation}
\end{prop}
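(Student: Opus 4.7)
The plan is to differentiate the Duhamel formula for $T$ and hit each piece with the smoothing estimate from lemma \ref{lemma:heat_flow_est}, using proposition \ref{prop:fuel_decay} and the trivial bound $|r(T)|\leq 1$ to control the nonlinear source.

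Concretely, from \eqref{eqn:mild_soln} I have
\[
\nabla T(x,t) = e^{-\lambda t}\nabla e^{t\Delta}T_0 + \int_0^t e^{-\lambda(t-\tau)} \nabla e^{(t-\tau)\Delta}\bigl[Y(x,\tau) r(T(x,\tau))\bigr]\, d\tau.
\]
For the linear term, I apply \eqref{eqn:smoothing_estimate} with $k=1$ in the $L^p\to L^p$ case (i.e.\ $\ell=\infty$) and discard the $e^{-\lambda t}$ factor to get $\lesssim t^{-1/2}\|T_0\|_{L^p_x}$, matching the first term on the right-hand side of \eqref{eqn:special_smoothing_est}.

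For the Duhamel term I apply \eqref{eqn:smoothing_estimate} with $k=1$ and the given $p,q,\ell$, which yields
\[
\bigl\|\nabla e^{(t-\tau)\Delta}[Yr(T)]\bigr\|_{L^p_x} \lesssim (t-\tau)^{-\frac12\left(\frac{d}{\ell}+1\right)}\|Y(\cdot,\tau)r(T(\cdot,\tau))\|_{L^q_x}.
\]
Since $|r(T)|\leq 1$ pointwise, $\|Yr(T)\|_{L^q_x}\leq \|Y(\cdot,\tau)\|_{L^q_x}\leq \|Y_0\|_{L^q_x}$ by proposition \ref{prop:fuel_decay}. Bounding $e^{-\lambda(t-\tau)}\leq 1$ and factoring out $\|Y_0\|_{L^q_x}$, the remaining time integral is $\int_0^t (t-\tau)^{-\frac12(d/\ell+1)}\,d\tau$.

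The main (mild) obstacle is ensuring this time integral converges at $\tau=t$: the exponent $-\tfrac12(d/\ell+1)$ exceeds $-1$ precisely when $d/\ell<1$, i.e.\ $\ell>d$, which is exactly the hypothesis. Under this condition a direct computation gives
\[
\int_0^t (t-\tau)^{-\frac12\left(\frac{d}{\ell}+1\right)}\,d\tau = \frac{2}{1-d/\ell}\,t^{\frac12\left(1-\frac{d}{\ell}\right)},
\]
which produces the second term in \eqref{eqn:special_smoothing_est}. The specialization \eqref{eqn:linfty_smoothing_est} is then just the case $p=q=\infty$, $\ell=\infty$, for which $\ell>d$ trivially holds and $\tfrac12(1-d/\ell)=\tfrac12$.
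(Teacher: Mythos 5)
Your proposal is correct and is essentially the paper's own argument: differentiate the Duhamel formula, apply the smoothing estimate \eqref{eqn:smoothing_estimate} with $k=1$ to each piece (with $\ell=\infty$ for the linear term), control the source via $|r(T)|\leq 1$ and proposition \ref{prop:fuel_decay}, and evaluate $\int_0^t (t-\tau)^{-\frac12\left(\frac{d}{\ell}+1\right)}\,\diff\tau$, which converges precisely because $\ell>d$. You have simply spelled out the details (including the role of the hypothesis $\ell>d$ and the specialization $p=q=\ell=\infty$) that the paper leaves implicit.
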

\begin{proof}
 Using \eqref{eqn:smoothing_estimate} and $|r(T)|\leq 1$, we have 
    \begin{align*}
        \left\|\nabla T(x,t)\right\|_{L^{p}_{x}} 
        &\lesssim t^{-\frac{1}{2}}\left\|T_0\right\|_{L^p_{x}} + \left\|Y_0\right\|_{L^q_x}\int_0^t \left(t-\tau\right)^{-\frac{1}{2}\left(\frac{d}{\ell}+1\right)} \ \diff \tau
    \end{align*}
    and we're all done upon computing the integral. 
\end{proof}
\noindent Thus solutions of \eqref{eqn:model_ivp_full} do not develop sharp edges in finite time. I expect that one can get rid of the temporally growing term in \eqref{eqn:special_smoothing_est} using a more thorough analysis.
\subsection{Decay of Small-Data Solutions}
\noindent Next I show that, if the initial temperature $T_0$ is sufficiently small, then $\lim_{t\rightarrow \infty}|T(x,t)|=0$ uniformly. This confirms that \eqref{eqn:model_ivp_full} has a nonzero auto-ignition temperature up to some mild assumptions on $T_0$. From our work in the last section, we know that $\lambda=0$ gives the worst $L^{\infty}_x$ temperature bounds possible, so in this section we'll set $\lambda=0$ for concreteness: if we can establish decay with $\lambda=0$ then it is even simpler to establish decay with $\lambda>0$. While this means we don't have access to the existence-uniqueness results from the previous section, we can still build a temporally-decaying global solution using the assumption of small initial data and the bootstrap principle. This strategy for establishing time decay is well-known in the theory of dispersive PDEs, see for instance \cite{Morgan2024}. 
\par Throughout this section, we suppose that $\eps_0\in (0,1)$ is a small parameter (to be constrained later) and 
\begin{equation}\label{eqn:smallness_hypothesis_4_burnout}
    \left\|T_0\right\|_{L^1_x\cap L^{\infty}_x} < \eps_0 \quad \text{and} \quad  \left\|Y_0\right\|_{L^{\infty}_x} < \infty. 
\end{equation}
The requirement that $T_0$ is small in $L^1_x$ is a minor technical one giving us access to a decay estimate.  Now, given $t_{*}\in (0,\infty]$ and a sufficiently nice function $T(x,t)$ defined for $t\in [0,t_{*}]$, we set 
\begin{equation*}
    \left\|T(x,t)\right\|_{X_{t_{*}}} \doteq \sup_{t\in [0,t_{*}]}\left[ \left(1+t\right)^{\frac{d}{2}}\left\|T(x,t)\right\|_{L^{\infty}_{x}}+ \left\|T(x,t)\right\|_{L^1_x}\right].
\end{equation*}
This norm naturally gives rise to a Banach space $X_{t_{*}}$ defined by
\begin{equation*}
    X_{t_{*}} \doteq \left\{T\colon [0,t_{*}] \rightarrow L^1_x \cap L^\infty_{x}\  \text{continuous}\ \bigg| \  \left\|T(x,t)\right\|_{X_{t_{*}}}  < \infty\right\}. 
\end{equation*}
\par The first order of business is to prove a new \textit{a priori} estimate for solutions that are small in $X_{t_*}$. 
\begin{prop}\label{prop:bootstrap_close}
    Take $\eps_1 \in \left[\eps_0, 1\right)$. Assume that \eqref{eqn:smallness_hypothesis_4_burnout} holds and that there is a corresponding mild solution $\left(T,Y\right)\in X_{t_{*}}\times C^0_t L^{\infty}_{x}$ to \eqref{eqn:model_ivp_full} valid on $[0,t_{*}]$. Further, assume that $\left\|T\right\|_{X_{t_{*}}} \leq \eps_1$. Then, there exists an absolute constant $C>0$ such that 
    \begin{equation}
        \left\|T(x,t)\right\|_{X_{t_{*}}} \leq 2\eps_0 + C\left\|Y_0\right\|_{L^\infty_x} \eps_1^{11}. 
    \end{equation}
\end{prop}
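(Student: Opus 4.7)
The plan is to work directly from the Duhamel representation (with $\lambda=0$),
\begin{equation*}
T(x,t) = e^{t\Delta}T_0 + \int_0^t e^{(t-\tau)\Delta}\bigl(Y(x,\tau)\,r(T(x,\tau))\bigr)\,\diff\tau,
\end{equation*}
decompose $T = T_L + T_N$ into its linear and Duhamel parts, and bound each in the $X_{t_*}$ norm separately. For the linear piece $T_L = e^{t\Delta}T_0$ I would use Lemma \ref{lemma:heat_flow_est}: the $L^1_x$ contribution is controlled by $\|T_0\|_{L^1_x}\le\eps_0$ via the contraction $\|e^{t\Delta}\|_{L^1\to L^1}\le 1$; for the weighted $L^\infty_x$ piece I split into $t\le 1$ (where $\|e^{t\Delta}T_0\|_{L^\infty_x}\le\|T_0\|_{L^\infty_x}$ and $(1+t)^{d/2}$ is bounded) and $t\ge 1$ (where the smoothing estimate \eqref{eqn:smoothing_estimate} with $q=1,\,\ell=1,\,p=\infty$ yields $\|e^{t\Delta}T_0\|_{L^\infty_x}\lesssim t^{-d/2}\|T_0\|_{L^1_x}$, and the weight $(1+t)^{d/2}t^{-d/2}$ stays bounded). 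Tracking constants carefully gives $\|T_L\|_{X_{t_*}}\le 2\eps_0$.

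For the nonlinear piece $T_N$, the bootstrap assumption $\|T\|_{X_{t_*}}\le\eps_1<1$ places $T(x,\tau)$ in the regime $[0,1]$ where Lemma \ref{lemma:reaction_rate} applies, so for the choice $p=11$ we have the pointwise bound $Y(x,\tau)\,r(T(x,\tau))\le C\,\|Y_0\|_{L^\infty_x}\,|T(x,\tau)|^{11}$ (using $Y\le Y_0$ from Proposition \ref{prop:fuel_decay}). The $X_{t_*}$ bootstrap then gives by interpolation
\begin{equation*}
\bigl\||T(\tau)|^{11}\bigr\|_{L^1_x}\le \|T(\tau)\|_{L^\infty_x}^{10}\|T(\tau)\|_{L^1_x}\le\eps_1^{11}(1+\tau)^{-5d}, \qquad \bigl\||T(\tau)|^{11}\bigr\|_{L^\infty_x}\le\eps_1^{11}(1+\tau)^{-11d/2}.
\end{equation*}
For the $L^1_x$ contribution to $T_N$, I would use $\|e^{(t-\tau)\Delta}\|_{L^1\to L^1}\le 1$ and integrate, which reduces everything to $\int_0^\infty(1+\tau)^{-5d}\,\diff\tau<\infty$ for every $d\ge 1$. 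For the weighted $L^\infty_x$ contribution, I would use the standard trick of splitting the Duhamel integral at $\tau=t/2$: on $[t/2,t]$, bound $\|e^{(t-\tau)\Delta}\|_{L^\infty\to L^\infty}\le 1$ and integrate the $L^\infty_x$ estimate, producing a factor $(1+t)^{1-11d/2}$ which is absorbed by the $(1+t)^{d/2}$ weight; on $[0,t/2]$, apply the smoothing estimate $\|e^{(t-\tau)\Delta}f\|_{L^\infty_x}\lesssim(t-\tau)^{-d/2}\|f\|_{L^1_x}$, where $t-\tau\ge t/2$ produces a favourable $(1+t)^{-d/2}$ factor cancelling the weight, leaving the integrable tail $\int_0^\infty(1+\tau)^{-5d}\,\diff\tau$. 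Summing the two pieces yields $\|T_N\|_{X_{t_*}}\le C\,\|Y_0\|_{L^\infty_x}\,\eps_1^{11}$.

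The main obstacle is the bookkeeping around the exponent $p$ in Lemma \ref{lemma:reaction_rate}: one must pick $p$ large enough that every time integral involving $(1+\tau)^{-(p-1)d/2}$ (from interpolation) and $(t-\tau)^{-d/2}$ (from smoothing) converges uniformly in $t\in[0,t_*]$ for \emph{every} $d\ge 1$, while also being compatible with the weighted decay rate built into $X_{t_*}$. The choice $p=11$ is comfortably sufficient for all $d\ge 1$ (since $(p-1)d/2\ge 5>1$), so the various inequalities combine without fuss and the final bound $\|T\|_{X_{t_*}}\le 2\eps_0+C\|Y_0\|_{L^\infty_x}\eps_1^{11}$ follows.
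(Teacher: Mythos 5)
Your proposal is correct and follows essentially the same route as the paper's own proof: the same choice $p=11$ in lemma \ref{lemma:reaction_rate}, the same interpolated bounds $\eps_1^{11}\left(1+\tau\right)^{-5d}$ and $\eps_1^{11}\left(1+\tau\right)^{-\frac{11}{2}d}$, the same splitting of the Duhamel integral at $\tau=t/2$ (smoothing on $[0,t/2]$, the $L^{\infty}_{x}$ contraction on $[t/2,t]$), and the same easy $L^1_x$ contraction argument. The one point to state explicitly is that for $t\leq 1$ the Duhamel term must be handled \emph{without} splitting, using only \eqref{eqn:weak_damping_estimate} (exactly as you already did for the linear piece), since $\left(t-\tau\right)^{-\frac{d}{2}}\leq \left(t/2\right)^{-\frac{d}{2}}$ is not $O\bigl(\left(1+t\right)^{-\frac{d}{2}}\bigr)$ as $t\rightarrow 0$; the paper disposes of this regime in one sentence.
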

\begin{proof}
  Let's look at the $L^{\infty}_{x}$ part of the $X_{t_{*}}$ norm first. Together with the hypothesis of our claim, lemma \ref{lemma:reaction_rate} (with $p=11$) implies $\left\|r(T)\right\|_{L^\infty_x}\leq C\eps_1^{11}\left(1+t\right)^{-\frac{11}{2}d}$ and $\left\|r(T)\right\|_{L^1_x}\leq C\eps_1^{11}\left(1+t\right)^{-5d}$. Combining these estimates with \eqref{eqn:weak_damping_estimate}, \eqref{eqn:smoothing_estimate}, and \eqref{eqn:smallness_hypothesis_4_burnout}, we have for $t> 1$
    \begin{align*}
        \left\|T(x,t)\right\|_{L^\infty_x}
        &\leq \left(1+t\right)^{-\frac{d}{2}} \eps_0 + \int_0^t \left\|e^{\left(t-\tau\right)\Delta} Y(x,\tau)r\left(T(x,\tau)\right)\right\|_{L^\infty_x} \ \diff \tau
        \\
        &\leq \left(1+t\right)^{-\frac{d}{2}} \eps_0 + \int_0^{t/2} \left(t-\tau\right)^{-\frac{d}{2}} \left\|Y(x,\tau) \ r\left(T(x,\tau)\right)\right\|_{L^1_{x}} \ \diff \tau
        + \int_{t/2}^t \left\|Y(x,\tau) \ r\left(T(x,\tau)\right)\right\|_{L^\infty_{x}} \ \diff \tau
        \\
        &\leq\left(1+t\right)^{-\frac{d}{2}}  \eps_0 + C\left\|Y_0\right\|_{L^\infty_{x}} \eps_1^{11} \left[\int_0^{t/2} \left(1+t-\tau\right)^{-\frac{d}{2}} \left(1+\tau\right)^{-5d} \ \diff \tau
        +\int_{t/2}^t \left(1+\tau\right)^{-\frac{11}{2}d}\right]
    \\ &\leq \left(1+t\right)^{-\frac{d}{2}}\left[\eps_0 + C\left\|Y_0\right\|_{L^\infty_x} \eps_1^{11}\right].
        \end{align*}
        For $t\leq  1$ we may apply an almost identical approach, but we don't split the Duhamel term and we only use \eqref{eqn:weak_damping_estimate}. This takes care of the $L^\infty_x$ piece. The $L^1_{x}$ estimate is obtained by a similar but even easier argument using \eqref{eqn:weak_damping_estimate} to control $\left\|e^{\left(t-\tau\right)\Delta}Yr\left(T\right)\right\|_{L^1_x}$.
\end{proof}
\noindent Note that there is nothing particularly special about the power of $p=11$ appearing in the above proof: any large enough natural number will do, and the rapid vanishing of $r(T)$ at $T=0$ guarantees we can pick such a large $p$. 

\begin{thm}\label{thm:decay_for_lambda=0_nonradiating}
    There exists $\eps_0 =\eps_0\left(\left\|Y_0\right\|_{L^\infty_x}\right)\in (0,1)$ such that, under the hypothesis \eqref{eqn:smallness_hypothesis_4_burnout}, \eqref{eqn:model_ivp_full} admits a unique global-in-time mild solution $\left(T,Y\right)\in X_{\infty}\times C^0_t L^{\infty}_x$ obeying the decay estimate
    \begin{equation}
        \left\|T(x,t)\right\|_{L^{\infty}_{x}} \lesssim \eps_0 \left(1+t\right)^{-\frac{d}{2}}. 
    \end{equation}
    In particular, $\lim_{t\rightarrow \infty}|T(x,t)| = 0$ uniformly in $x$. 
\end{thm}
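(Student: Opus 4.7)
My plan is to combine the a priori improvement in Proposition \ref{prop:bootstrap_close} with a local existence theory in $X_{t_*}$ and a standard continuity bootstrap. The proposition already does the analytic work; what remains is to construct a local mild solution with a sharp blow-up alternative and choose $\eps_0$ so the bootstrap closes.

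First I would prove local existence in $X_{t_1}$ for a short time $t_1 > 0$ depending on $\|T_0\|_{L^1_x \cap L^\infty_x}$ and $\|Y_0\|_{L^\infty_x}$. Since the $Y$ factor in \eqref{eqn:mild_soln} is explicitly determined by $T$ and obeys $\|Y(\cdot,\tau)\|_{L^\infty_x} \leq \|Y_0\|_{L^\infty_x}$, the Duhamel formula reduces to a fixed-point problem for $T$ alone. Lemma \ref{lemma:heat_flow_est}, the bound $|r(T)| \leq 1$, and the Lipschitz continuity of $r$ on bounded sets give a contraction on a small ball in $X_{t_1}$, yielding a unique mild solution together with the standard continuation alternative: either the solution extends to $t = \infty$, or there is a maximal $T_{\max} < \infty$ with $\limsup_{t_* \uparrow T_{\max}} \|T\|_{X_{t_*}} = \infty$.

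Next I would close the bootstrap. Put $\eps_1 = 4\eps_0$; then Proposition \ref{prop:bootstrap_close} says that $\|T\|_{X_{t_*}} \leq 4\eps_0$ self-improves to $\|T\|_{X_{t_*}} \leq 2\eps_0 + C\|Y_0\|_{L^\infty_x}(4\eps_0)^{11}$. Choose $\eps_0 = \eps_0(\|Y_0\|_{L^\infty_x}) \in (0, 1/4)$ so small that this improvement is at most $3\eps_0$; an elementary computation shows this holds provided $\eps_0^{10} \leq c/\|Y_0\|_{L^\infty_x}$ for an absolute constant $c > 0$. The function $t_* \mapsto \|T\|_{X_{t_*}}$ is continuous and nondecreasing on $[0, T_{\max})$ (continuity inherited from $T \in C^0_t(L^1_x \cap L^\infty_x)$) and starts at $\|T_0\|_{L^1_x \cap L^\infty_x} < \eps_0 < 4\eps_0$; if it ever reached $4\eps_0$ at some finite $t^* < T_{\max}$, the improvement would force its value at $t^*$ to be at most $3\eps_0$, a contradiction. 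So it stays below $4\eps_0$ on $[0, T_{\max})$, which by the continuation alternative forces $T_{\max} = \infty$. The decay estimate $\|T(\cdot,t)\|_{L^\infty_x} \lesssim \eps_0(1+t)^{-d/2}$ and the uniform limit $|T(x,t)| \to 0$ are immediate from the $X_\infty$ bound, and global uniqueness follows from local uniqueness by the usual argument of extending the set of agreement times.

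The main obstacle I anticipate is the local existence step: one must verify a careful Lipschitz estimate for the nonlinearity $T \mapsto Y_0 \exp(-\beta\int_0^t r(T)\,d\sigma)\, r(T)$, whose dependence on $T$ is both pointwise (through $r$) and time-integrated (through the exponential), and control it in the relatively strong $X_{t_1}$ norm. Once this is done and paired with a clean blow-up criterion, the remainder is a routine continuity bootstrap powered by Proposition \ref{prop:bootstrap_close}.
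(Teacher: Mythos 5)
Your proposal is correct and follows essentially the same route as the paper: local existence in $X_{t_*}$ via fixed-point iteration, continuity of $t_* \mapsto \left\|T\right\|_{X_{t_*}}$, self-improvement via Proposition \ref{prop:bootstrap_close}, a smallness condition of the form $\eps_0^{10} \lesssim \left\|Y_0\right\|_{L^\infty_x}^{-1}$, and a continuous-induction closure. The only differences are cosmetic (you bootstrap with $\eps_1 = 4\eps_0$ where the paper uses $3\eps_0$, and you spell out the continuation alternative that the paper delegates to the standard bootstrap principle).
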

\begin{proof}
    Using fixed-point iteration, one finds a $t_{*}>0$ such that \eqref{eqn:model_ivp_full} has a unique local-in-time mild solution valid on $[0,t_*]$ with $T(x,t)\in X_{t_{*}}$. Further, the map $\tau\mapsto \left\|T\right\|_{\tau}$ is easily shown to be continuous for $\tau\in [0,t_*]$. Accordingly, for sufficiently small $t_{*}$ we know $\left\|T\right\|_{X_{t_{*}}}\leq 3\eps_0$. Now, apply proposition \ref{prop:bootstrap_close} with $\eps_1 = 3\eps_0$ to discover 
    \begin{align*}
        \left\|T\right\|_{X_{t_{*}}} \leq 2\eps_0 \left(1 + C \left\|Y_0\right\|_{L^\infty_x} \eps_0^{10}\right). 
    \end{align*}
    By choosing $\eps_0 < \frac12\left(C\left\|Y_0\right\|_{L^\infty_x}\right)^{-\frac{1}{10}}$, the above becomes $\left\|T\right\|_{X_{t_{*}}}\leq \frac52\eps_0$. The bootstrap principle (see for example \cite{Tao2006}) allows us to iterate this argument and produce a global solution satisfying $\left\|T\right\|_{X_{\infty}}\lesssim \eps_0$. 
\end{proof}
\noindent 
The $\eps_0$ from theorem \ref{thm:decay_for_lambda=0_nonradiating} should be regarded as a lower bound on the auto-ignition temperature for the model \eqref{eqn:model_ivp_full}, but my methods cannot determine how sharp this bound is. 

\section{Acknowledgements}
I'd like to thank Carrie Clark for helpful comments on the draft manuscript, and Dominic Shillingford for invaluable discussions. 

\bibliography{fire_modelling}
\bibliographystyle{plain}

\end{document}